\newtheorem{theorem}{Theorem}[section]
\newtheorem{lemma}[theorem]{Lemma}
\newtheorem{corollary}[theorem]{Corollary}
\newtheorem{example}[theorem]{Example}
\newtheorem{remark}[theorem]{Remark}
\def\DD{\mathbb{D}}
\def\NN{\mathbb{N}}
\def\CC{\mathbb{C}}
\def\XX{\mathcal{X}}
\def\YY{\mathcal{Y}}
\def\HH{\mathcal{H}}
\def\KK{\mathcal{K}}
\numberwithin{equation}{section}
\def\int{{\rm int}}
\def\asc{{\rm asc}}
\def\dsc{{\rm dsc}}
\def\acc{{\rm acc}}
\def\dim{{\rm dim}}
\def\codim{{\rm codim}}
\title{Generalized Drazin invertibility of operator matrices}
\author{Milo\v s D. Cvetkovi\'c}
\date{}
\begin{document}

\maketitle

\begin{abstract}
\noindent We study the generalized Drazin invertibility as well as
the Drazin and ordinary invertibility of an operator matrix
$M_C=\left(
\begin{array}{cc} A & C \\
0 & B\end{array} \right)$ acting on a Banach space $\XX \oplus \YY$
or on a Hilbert space $\HH \oplus \KK$. As a consequence some recent
results are extended.
\end{abstract}

2010 {\it Mathematics subject classification\/}. 47A10, 47A53.

{\it Key words and phrases\/}. Operator matrices, generalized Drazin
invertibility, generalized Kato decomposition, point spectrum,
defect spectrum.

\section{Introduction and Preliminaries}

Let $\XX$ and $\YY$ be infinite dimensional Banach spaces. The set
of all bounded linear operators from $\XX$ to $\YY$ will be denoted
by $L(\XX,\YY)$. For simplicity, we write $L(\XX)$ for $L(\XX,
\XX)$. The set
\[\XX \oplus \YY=\{(x,y): x \in \XX, y \in \YY \} \]
is a vector space with standard addition and multiplication by
scalars. Under the norm
\[||(x,y)||=(||x||^2+||y||^2)^{\frac{1}{2}}\]
$\XX \oplus \YY$ becomes a Banach space. If $\XX_1$ and $\YY_1$ are
closed subspaces of $\XX$ and $\YY$ respectively, then we will use
sometimes notation $\left( \begin{array}{c} \XX_1 \\
\YY_1 \end{array} \right)$ instead of $\XX_1 \oplus \YY_1$. If $A
\in L(\XX)$, $B \in L(\YY)$ and $C \in L(\YY, \XX)$ are given, then
\[\left(
\begin{array}{cc} A & C \\
0 & B\end{array} \right): \XX \oplus \YY \to \XX \oplus \YY \]
represents a bounded linear operator on $\XX \oplus \YY$ and it is
called {\em upper triangular operator matrix}. If $A$ and
$B$ are given then we write $M_C=\left( \begin{array}{cc} A & C \\
0 & B \end{array} \right)$ in order to emphasize dependence on $C$.

Let $\mathbb{N} \, (\mathbb{N}_0)$ denote the set of all positive
(non-negative) integers, and let $\mathbb{C}$ denote the set of all
complex numbers. Given $T \in L(\XX, \YY)$, we denote by $N(T)$ and
$R(T)$ the {\em kernel} and the {\em range} of $T$. The numbers
$\alpha(T)=\dim N(T)$ and $\beta(T)=\codim R(T)$ are {\em nullity}
and {\em deficiency} of $T \in L(\XX, \YY)$ respectively. The set
\[\sigma(T)=\{\lambda \in \CC: T-\lambda \; \text{is not invertible} \} \]
is the {\em spectrum} of $T \in L(\XX)$. An operator $T \in L(\XX)$
is {\em bounded below} if there exists some $c>0$ such that
$c\|x\|\leq \|Tx\| \; \; \text{for every} \; \; x \in \XX$. It is
worth mentioning that the set of bounded below operators and the set
of left invertible operators coincide in the Hilbert space setting.
Also, the set of surjective operators and the set of right
invertible operators coincide on a Hilbert space. Recall that $T \in
L(\XX)$ is nilpotent when $T^n=0$ for some $n \in \NN$, while $T \in
L(\XX)$ is {\em quasinilpotent} if $T-\lambda$ is invertible for all
complex $\lambda\ne 0$. The {\em ascent} of $T \in L(\XX)$ is
defined as $\asc(T)=\inf\{n \in \mathbb{N}_0:N(T^n)=N(T^{n+1})\}$,
and {\em descent} of $T$ is defined as $\dsc(T)=\inf\{n \in
\mathbb{N}_0:R(T^n)=R(T^{n+1})\}$, where the infimum over the empty
set is taken to be infinity. If $K \subset \mathbb{C}$, $\acc \, K$
is the set of accumulation points of $K$.

If $M$ is a subspace of $\XX$ such that $T(M) \subset M$, $T \in
L(\XX)$, it is said that $M$ is {\em $T$-invariant}. We define
$T_M:M \to M$ as $T_Mx=Tx, \, x \in M$.  If $M$ and $N$ are two
closed $T$-invariant subspaces of $\XX$ such that $\XX=M \oplus N$,
we say that $T$ is {\em completely reduced} by the pair $(M,N)$ and
it is denoted by $(M,N) \in Red(T)$. In this case we write $T=T_M
\oplus T_N$ and say that $T$ is a {\em direct sum} of $T_M$ and
$T_N$.

An operator $T \in L(\XX)$ is said to be {\em Drazin invertible}, if
there exists $S \in L(\XX)$ and some $k \in \NN$ such that
\[TS=ST, \; \; \; STS=S, \; \; \; T^kST=T^k.\]
It is a classical result that necessary and sufficient for $T \in
L(\XX)$ to be Drazin invertible is that $T=T_1 \oplus T_2$ where
$T_1$ is invertible and $T_2$ is nilpotent; see \cite{K, lay}. The
{\em Drazin spectrum} of $T$ is defined as
\[\sigma_D(T)=\{\lambda \in \CC: T-\lambda \; \text{is not Drazin invertible} \}
\] and it is compact \cite[Proposition 2.5]{Ber1}. An operator $T
\in L(\XX)$ is {\em left Drazin invertible} if $\asc(T)<\infty$ and
if $R(T^{\asc(T)+1})$ is closed and $T \in L(\XX)$ is {\em right
Drazin invertible} if $\dsc(T)<\infty$ and if $R(T^{\dsc(T)})$ is
closed.

J. Koliha extended the concept of Drazin invertibility
\cite{koliha}. An operator $T \in L(\XX)$ is said to be {\em
generalized Drazin invertible}, if there exists $S \in L(\XX)$ such
that
\[TS=ST, \; \; \; STS=S, \; \; \; TST-T \; \; \text{is quasinilpotent}.\]

\noindent According to \cite[Theorem 4.2 and Theorem 7.1]{koliha},
$T \in L(\XX)$ is generalized Drazin invertible if and only if $0
\not \in \acc \, \sigma(T)$, and it is exactly when $T=T_1 \oplus
T_2$ with $T_1$ invertible and $T_2$ quasinilpotent. Naturally, the
set
\[\sigma_{gD}(T)=\{\lambda \in \CC: T-\lambda \; \text{is not generalized Drazin  invertible} \} \]
is the {\em generalized Drazin spectrum} of $T$.

We also recall definitions of the following spectra of $T \in
L(\XX)$:\par

\medskip

\noindent the point spectrum: $\sigma_p(T)=\{\lambda \in \CC:
T-\lambda \; \text{is not injective} \}$,

\smallskip

\noindent the defect spectrum: $\sigma_d(T)=\{\lambda \in \CC:
\overline{R(T-\lambda)} \neq \XX \}$,

\smallskip

\noindent the approximate point spectrum: $\sigma_{ap}(T)=\{\lambda
\in \CC: T-\lambda \; \text{is not bounded below} \}$,\par

\smallskip

\noindent the surjective spectrum: $\sigma_{su}(T)=\{\lambda \in
\CC: T-\lambda \; \text{is not surjective} \}$,\par

\smallskip

\noindent the left spectrum: $\sigma_l(T)=\{\lambda \in \CC:
T-\lambda \; \text{is not left invertible} \}$,\par

\smallskip

\noindent the right spectrum: $\sigma_r(T)=\{\lambda \in \CC:
T-\lambda \; \text{is not right invertible} \}$.\par

\medskip

Let $M$ and $L$ be two closed linear subspaces of $\XX$ and set
\[\delta(M,L)=\sup \{dist(u,L): u \in M, \, \|u\|=1\},\]
in the case that $M \neq \{0\}$, otherwise we define $\delta(\{0\},
L)=0$ for any subspace $L$. The {\em gap} between $M$ and $L$ is
defined by
\[\hat{\delta}(M,L)=\max\{\delta(M,L), \delta(L,M)\}.\]
It is known that \cite[corollary 10.10]{Mu}
\begin{equation}\label{gp1}
  \hat{\delta}(M,L)<1\Longrightarrow\dim M=\dim L.
\end{equation}

An operator $T \in L(\XX)$ is {\em semi-regular} if $R(T)$ is closed
and if one of the following equivalent statements holds:\par

\noindent {\rm (i)} $N(T) \subset R(T^m)$ for each $m \in \NN$;\par

\smallskip

\noindent {\rm (ii)} $N(T^n) \subset R(T)$ for each $n \in \NN$.\par

\smallskip

\noindent It is clear that left and right invertible operators are
semi-regular. V. M\"{u}ller shows (\cite[Corollary 12.4]{Mu}) that
if $T \in L(\XX)$, then $T$ is semi-regular and $0 \not \in \acc \,
\sigma(T)$ imply that $T$ is invertible. In particular,
\begin{align}
T \; \; \text{is left invertible and} \; \; 0 \not \in \acc \, \sigma(T) \; \; \Longrightarrow T \; \; \text{is invertible}, \label{leftsemi}\\
T \; \; \text{is right invertible and} \; \; 0 \not \in \acc \,
\sigma(T) \; \; \Longrightarrow T \; \; \text{is
invertible}.\label{rightsemi}
\end{align}

An operator $T \in L(\XX)$ is said to admit a {\em generalized Kato
decomposition}, abbreviated as GKD, if there exists a pair $(M,N)
\in Red(T)$ such that $T_M$ is semi-regular and $T_N$ is
quasinilpotent. A relevant case is obtained if we assume that $T_N$
is nilpotent. In this case $T$ is said to be of {\em Kato type}.

The invertibility, Drazin invertibility and generalized Drazin
invertibility of upper triangular operator matrices have been
studied by many authors, see for example \cite{Du, Han, ELA, Zhong,
maroko, Dragan, Dragan1, Zhang, pseudoBW}. In this article we study
primarily the generalized Drazin invertibility but also the Drazin
and ordinary invertibility of operator matrices by using the
technique that involves the gap theory; see the auxiliary results:
\eqref{gp1}, \eqref{leftsemi}, \eqref{rightsemi}, Lemma
˜\ref{lema1}, Lemma ˜\ref{lema3}. Let $A \in L(\HH)$ and $B \in
L(\KK)$, where $\HH$ and $\KK$ are separable Hilbert spaces, and
consider the following conditions:\par

\smallskip

\noindent {\rm (i)} $A=A_1 \oplus A_2$, $A_1$ is bounded below and
$A_2$ is quasinilpotent; \par \noindent{\rm (ii)} $B=B_1 \oplus
B_2$, $B_1$ is surjective and $B_2$ is quasinilpotent;
\par \noindent {\rm (iii)} There exists a constant $\delta>0$ such
that $\beta(A-\lambda)=\alpha(B-\lambda)$ for every $\lambda \in
\CC$ such that $0<|\lambda|<\delta$. \par

\smallskip

\noindent Our main result stands that if the conditions {\rm
(i)}-{\rm (iii)} are satisfied then there exists $C \in L(\KK, \HH)$
such that $M_C$ is generalized Drazin invertible. The converse is
also true under the assumption that $A$ and $B$ admit a GKD.
Moreover, we obtain the corresponding results concerning the Drazin
invertibility of operator matrices. Further, let $A \in L(\XX)$ and
$B \in L(\YY)$ where $\XX$ and $\YY$ are Banach spaces. It is also
shown that if $0$ is not an accumulation point of the defect
spectrum of $A$ or of the point spectrum of $B$ and if the operator
matrix $M_C=\left(
\begin{array}{cc} A & C \\
0 & B\end{array} \right)$ is invertible (resp. Drazin invertible,
generalized Drazin invertible) for some $C \in L(\YY, \XX)$ then $A$
and $B$ are both invertible (resp. Drazin invertible, generalized
Drazin invertible). What is more, we give several corollaries that
improve some recent results.

\section{Results}

From now on $\XX$ and $\YY$ will denote Banach spaces, while $\HH$
and $\KK$ will denote separable Hilbert spaces. We begin with
several auxiliary lemmas.

\begin{lemma} \label{lema1}
Let $T \in L(\XX)$ be semi-regular. Then there exists $\epsilon
>0$ such that $\alpha(T-\lambda)$ and $\beta(T-\lambda)$ are
constant for $|\lambda|<\epsilon$.
\end{lemma}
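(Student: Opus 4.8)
The plan is to combine the openness, under small scalar perturbations, of the class of semi-regular operators with the gap estimate \eqref{gp1}. Since $T$ is semi-regular, $R(T)$ is closed, and by the Kato perturbation theory for semi-regular operators \cite[Section 12]{Mu} there is $\gamma>0$ such that $T-\lambda$ is semi-regular for every $|\lambda|<\gamma$, and moreover the subspace-valued maps $\lambda\mapsto N(T-\lambda)$ and $\lambda\mapsto R(T-\lambda)$ are continuous on the disc $\{\lambda:|\lambda|<\gamma\}$ with respect to the gap $\hat\delta$. In particular these maps attain the values $N(T)$ and $R(T)$ at $\lambda=0$, so $\hat\delta(N(T-\lambda),N(T))\to 0$ and $\hat\delta(R(T-\lambda),R(T))\to 0$ as $\lambda\to 0$.

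Using this continuity I would then fix the radius. There exists $\epsilon\in(0,\gamma]$ such that
\[
\hat\delta\bigl(N(T-\lambda),N(T)\bigr)<1 \quad\text{and}\quad \hat\delta\bigl(R(T-\lambda),R(T)\bigr)<1 \qquad\text{whenever}\quad |\lambda|<\epsilon .
\]

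Finally I would read off the two equalities from the gap dimension estimates. By \eqref{gp1}, the first inequality gives $\alpha(T-\lambda)=\dim N(T-\lambda)=\dim N(T)=\alpha(T)$ for all $|\lambda|<\epsilon$. Applying to the second inequality the codimension part of \cite[corollary 10.10]{Mu} (which, in addition to \eqref{gp1}, asserts $\codim M=\codim L$ whenever $\hat\delta(M,L)<1$), one gets $\beta(T-\lambda)=\codim R(T-\lambda)=\codim R(T)=\beta(T)$ for all $|\lambda|<\epsilon$. Both identities trivially include $\lambda=0$, so the lemma follows.

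I expect the only genuine difficulty to lie in the perturbation statement quoted in the first paragraph: the stability of semi-regularity under small perturbations of the scalar parameter together with the continuous dependence of the kernel and the range in the gap metric. This is Kato's classical theorem, so in the write-up I would simply cite it from \cite{Mu} rather than reproduce its proof; everything afterwards is a direct application of \eqref{gp1} and its codimension analogue. (A self-contained argument could be obtained by working inside the closed subspace $\bigcap_{n}R(T^n)$, on which the semi-regularity of $T$ yields a favorable structure modulo $N(T)$ that survives a Neumann-type perturbation, but this essentially re-derives the relevant Kato theory and is substantially longer.)
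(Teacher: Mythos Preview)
Your argument is correct, and the treatment of $\alpha(T-\lambda)$ is identical to the paper's: gap-continuity of $\lambda\mapsto N(T-\lambda)$ combined with \eqref{gp1}. For $\beta(T-\lambda)$ you take a slightly different path: you use the gap-continuity of $\lambda\mapsto R(T-\lambda)$ together with the codimension half of \cite[Corollary 10.10]{Mu}, whereas the paper passes to the adjoint $T'$ (which is again semi-regular by \cite[Theorem 1.19]{aiena}), applies the kernel argument to $T'$ to obtain constancy of $\alpha(T'-\lambda)$, and then uses the fact that $T-\lambda$ has closed range for small $|\lambda|$ to identify $\alpha(T'-\lambda)=\beta(T-\lambda)$. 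Both routes rest on the same Kato perturbation theory; yours is marginally more direct, while the paper's only needs the dimension half of the gap corollary at the cost of an extra duality step.
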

\begin{proof}
Let $T \in L(\XX)$ be semi-regular. The mapping $\lambda \to
N(T-\lambda)$ is continuous at the point $0$ in the gap metric; see
\cite[Theorem 1.38]{aiena} or \cite[Theorem 12.2]{Mu}. In
particular, there exists $\epsilon_1>0$ such that
$|\lambda|<\epsilon_1$ implies $\hat{\delta}(N(T), N(T-\lambda))<1$.
From \eqref{gp1} we obtain $\dim N(T)=\dim N(T-\lambda)$, i.e.
$\alpha(T)=\alpha(T-\lambda)$ for $|\lambda|<\epsilon_1$. Further,
$T^{\prime}$ is also semi-regular where $T^{\prime}$ is adjoint
operator of $T$ \cite[Theorem 1.19]{aiena}. As above we conclude
that $\alpha(T^{\prime})=\alpha(T^{\prime}-\lambda)$ on an open disc
centered at $0$. Since $T-\lambda$ has closed range for sufficiently
small $|\lambda|$ \cite[Theorem 1.31]{aiena}, it follows that there
exists $\epsilon_2>0$ such that
\[\beta(T)=\alpha(T^{\prime})=\alpha(T^{\prime}-\lambda)=\beta(T-\lambda)
\; \; \text{for} \; \; |\lambda|<\epsilon_2.\] We put
$\epsilon=\min\{\epsilon_1, \epsilon_2\}$, and the lemma follows.
\end{proof}

\begin{lemma} [\cite{Han}] \label{invertibility}
Let $A \in L(\XX)$ and $B \in L(\YY)$. If $A$ and $B$ are both
invertible, then $M_C$ is invertible for every $C \in L(\YY, \XX)$.
In addition, if $M_C$ is invertible for some $C \in L(\YY, \XX)$,
then $A$ is invertible if and only if $B$ is invertible.
\end{lemma}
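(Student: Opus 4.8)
The plan is to prove the first assertion by exhibiting an explicit inverse in block form, and the second by an elementary chase on the coordinate components combined with the open mapping theorem.

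For the first part, assuming $A$ and $B$ invertible, I would verify directly that
\[ N=\left( \begin{array}{cc} A^{-1} & -A^{-1}CB^{-1} \\ 0 & B^{-1} \end{array} \right)\in L(\XX \oplus \YY) \]
satisfies $M_CN=NM_C=I$; this is immediate block multiplication using only $AA^{-1}=A^{-1}A=I$ and $BB^{-1}=B^{-1}B=I$, and it shows that $M_C$ is invertible for \emph{every} $C \in L(\YY,\XX)$.

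For the second part, suppose $M_C$ is invertible for some fixed $C$. I would first record two facts requiring no extra hypothesis: $A$ is injective, since $M_C(x,0)=(Ax,0)$ and injectivity of $M_C$ forces $Ax=0\Rightarrow x=0$; and $B$ is surjective, since for $y\in\YY$ the surjectivity of $M_C$ gives $(u,v)$ with $M_C(u,v)=(0,y)$, whence $Bv=y$. Now if, in addition, $A$ is invertible, then any $v$ with $Bv=0$ satisfies $M_C(-A^{-1}Cv,v)=(0,0)$, so $v=0$; hence $B$ is also injective, thus bijective, thus invertible by the open mapping theorem. Conversely, if $B$ is invertible, then for $x\in\XX$ the surjectivity of $M_C$ provides $(u,v)$ with $Au+Cv=x$ and $Bv=0$; the latter forces $v=0$, so $Au=x$ and $A$ is surjective, hence bijective, hence invertible.

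I do not anticipate a genuine obstacle: the whole argument is formal once one tracks the block action of $M_C$ on vectors $(x,0)$ and $(u,v)$. The single non-algebraic ingredient is the open mapping theorem, used to upgrade bijectivity of $A$ or $B$ to boundedness of the inverse, which is legitimate since $\XX$ and $\YY$ are Banach spaces.
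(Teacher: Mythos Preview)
Your argument is correct. Note, however, that the paper does not supply its own proof of this lemma: it is quoted verbatim from \cite{Han} and used as a black box. Your explicit block-inverse for the first assertion and your coordinate chase (plus the open mapping theorem) for the second are the standard elementary route to this fact, and they go through exactly as you describe; there is nothing to compare against in the present paper.
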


\begin{lemma} [\cite{joint}] \label{lema3}

\noindent {\rm (a)} Let $T \in L(\XX)$. The following conditions are
equivalent:\par

\noindent {\rm (i)} There exists $(M,N) \in Red(T)$ such that $T_M$
is bounded below (resp. surjective) and $T_N$ is quasinilpotent;\par

\noindent {\rm (ii)} $T$ admits a GKD and $0 \not \in \acc \;
\sigma_{ap}(T)$ (resp. $0 \not \in \acc \; \sigma_{su}(T))$.

\medskip

\noindent {\rm (b)} Let $T \in L(\XX)$. The following conditions are
equivalent:\par

\noindent {\rm (i)} There exists $(M,N) \in Red(T)$ such that $T_M$
is bounded below (resp. surjective) and $T_N$ is nilpotent;\par

\noindent {\rm (ii)} $T$ is of Kato type and $0 \not \in \acc \;
\sigma_{ap}(T)$ (resp. $0 \not \in \acc \; \sigma_{su}(T))$.
\end{lemma}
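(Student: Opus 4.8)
The plan is to establish both equivalences by carrying out the ``bounded below''/$\sigma_{ap}$ case of part (a) in full and then noting that the remaining cases follow by cosmetic changes: part (b) is part (a) with ``quasinilpotent'' replaced by ``nilpotent'' and ``GKD'' by ``Kato type'' throughout, while the ``surjective''/$\sigma_{su}$ case is obtained either by passing to adjoints or by repeating the argument with $\alpha$, injectivity and ``bounded below'' replaced by $\beta$, surjectivity and ``onto'', now using the $\beta$-half of Lemma \ref{lema1}.

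For $(i)\Rightarrow(ii)$, assume $(M,N)\in Red(T)$ with $T_M$ bounded below and $T_N$ quasinilpotent. First I would observe that a bounded below operator is injective with closed range, hence semi-regular, so the very same pair $(M,N)$ is a GKD for $T$. Then I would check $0\notin\acc\,\sigma_{ap}(T)$ from the decomposition $T=T_M\oplus T_N$: a direct sum is bounded below precisely when each summand is, so $\sigma_{ap}(T)=\sigma_{ap}(T_M)\cup\sigma_{ap}(T_N)$; since the bounded below operators form an open subset of $L(\XX)$, the operator $T_M-\lambda$ stays bounded below for all small $|\lambda|$, whence $0\notin\overline{\sigma_{ap}(T_M)}$; and $\sigma_{ap}(T_N)\subseteq\sigma(T_N)=\{0\}$ by quasinilpotence. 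Hence $0$ is not an accumulation point of the union.

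For $(ii)\Rightarrow(i)$, let $(M,N)\in Red(T)$ be a GKD, so $T_M$ is semi-regular and $T_N$ quasinilpotent, and suppose $0\notin\acc\,\sigma_{ap}(T)$. Choosing $\epsilon>0$ with $T-\lambda$ bounded below for $0<|\lambda|<\epsilon$, and using that $T-\lambda=(T_M-\lambda)\oplus(T_N-\lambda)$ respects the reduction, I get that $T_M-\lambda$ is bounded below, hence $\alpha(T_M-\lambda)=0$, for $0<|\lambda|<\epsilon$. Now I apply Lemma \ref{lema1} to the semi-regular operator $T_M$: $\alpha(T_M-\lambda)$ is constant on a neighbourhood of $0$, which forces $\alpha(T_M)=0$; combined with $R(T_M)$ being closed, this says $T_M$ is bounded below, while $T_N$ is already quasinilpotent, so the pair $(M,N)$ serves in (i). The same decomposition also handles part (b), since the summand $T_N$ produced by a Kato type decomposition is nilpotent and is left untouched by the argument.

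The routine points are that ``bounded below'' (resp. ``surjective'') behaves well under direct sums and small perturbations, and that $\sigma_{ap}(T_N)$ (resp. $\sigma_{su}(T_N)$) collapses to $\{0\}$ for quasinilpotent $T_N$. The one genuinely load-bearing step, and the one I expect to demand the most care, is the passage in $(ii)\Rightarrow(i)$ from ``$T_M$ semi-regular'' to ``$T_M$ bounded below (resp. surjective)'': this is exactly where the local constancy of $\alpha$ (resp. $\beta$) supplied by Lemma \ref{lema1} is indispensable, and where one must reconcile the competing small discs coming from $0\notin\acc\,\sigma_{ap}(T)$ and from Lemma \ref{lema1}.
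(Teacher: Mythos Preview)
The paper does not supply its own proof of this lemma; it is quoted verbatim from the submitted manuscript \cite{joint}, so there is no in-paper argument to compare yours against. That said, your proof is correct and is exactly the sort of argument one expects: the implication $(i)\Rightarrow(ii)$ is immediate from the openness of bounded below (resp.\ surjective) operators together with $\sigma(T_N)\subset\{0\}$, while the nontrivial direction $(ii)\Rightarrow(i)$ hinges precisely on the local constancy of $\alpha(T_M-\lambda)$ (resp.\ $\beta(T_M-\lambda)$) for the semi-regular part $T_M$, which is what Lemma~\ref{lema1} provides. Your identification of this as the load-bearing step is accurate, and the reduction of the surjective/$\sigma_{su}$ and Kato-type variants to cosmetic changes is legitimate.
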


We now prove our first main result.

\begin{theorem} \label{GD}
Let $A \in L(\HH)$ and $B \in L(\KK)$ be given operators on
separable Hilbert spaces $\HH$ and $\KK$, respectively, such
that:\par

\medskip

\noindent {\rm (i)} $A=A_1 \oplus A_2$, $A_1$ is bounded below and
$A_2$ is quasinilpotent; \par \noindent{\rm (ii)} $B=B_1 \oplus
B_2$, $B_1$ is surjective and $B_2$ is quasinilpotent; \par
\noindent {\rm (iii)} There exists a constant $\delta>0$ such that
$\beta(A-\lambda)=\alpha(B-\lambda)$ for every $\lambda \in \CC$
such that $0<|\lambda|<\delta$. \par

\smallskip

\noindent Then there exists an operator $C \in L(\KK, \HH)$ such
that $M_C$ is generalized Drazin invertible.
\end{theorem}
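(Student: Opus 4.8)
The plan is to decompose both spaces according to hypotheses (i) and (ii), and then choose $C$ to "glue together" the semi-regular parts while leaving the quasinilpotent parts untouched. Write $\HH = \HH_1 \oplus \HH_2$ with $A = A_1 \oplus A_2$, $A_1 \in L(\HH_1)$ bounded below, $A_2 \in L(\HH_2)$ quasinilpotent; and $\KK = \KK_1 \oplus \KK_2$ with $B = B_1 \oplus B_2$, $B_1 \in L(\KK_1)$ surjective, $B_2 \in L(\KK_2)$ quasinilpotent. With respect to the decomposition $\HH \oplus \KK = (\HH_1 \oplus \KK_1) \oplus (\HH_2 \oplus \KK_2)$, any $C$ that maps $\KK_1$ into $\HH_1$ and kills $\KK_2$ will make $M_C$ a direct sum $M_{C_1} \oplus (A_2 \oplus B_2)$, where $M_{C_1} = \left(\begin{smallmatrix} A_1 & C_1 \\ 0 & B_1 \end{smallmatrix}\right)$ on $\HH_1 \oplus \KK_1$ and $A_2 \oplus B_2$ on $\HH_2 \oplus \KK_2$. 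Since $A_2$ and $B_2$ are quasinilpotent on Banach spaces, $A_2 \oplus B_2$ is quasinilpotent; hence it suffices to choose $C_1$ so that $M_{C_1}$ is generalized Drazin invertible, i.e. $0 \notin \acc\,\sigma(M_{C_1})$.

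Now $A_1$ is bounded below, so (being on a Hilbert space) left invertible, and $B_1$ is surjective, hence right invertible; in particular both are semi-regular, so $R(A_1)$ and $R(B_1)$ are closed and $\alpha(A_1), \beta(B_1)$ are finite. The key point is a dimension-matching condition: there is a classical result (see e.g. \cite{Han, ELA}) that for $A_1$ left invertible and $B_1$ right invertible on separable Hilbert spaces, there exists $C_1$ making $M_{C_1}$ invertible precisely when $\beta(A_1) = \alpha(B_1)$ (the "defect of $A_1$ matches the kernel of $B_1$", both possibly infinite, and separability guarantees one can realize the needed isomorphism between $R(A_1)^\perp$ and $N(B_1)$). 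I would either invoke that directly, or — to be safe — only aim for generalized Drazin invertibility: I want to arrange that $M_{C_1}-\lambda$ is invertible for all small $\lambda \neq 0$. For $\lambda$ small and nonzero, $A_1 - \lambda$ is still bounded below and $B_1 - \lambda$ is still surjective (these are open conditions), so by Lemma~\ref{lema1} applied to the semi-regular operators $A_1, B_1$, the quantities $\beta(A_1-\lambda) = \beta(A_1)$ and $\alpha(B_1-\lambda) = \alpha(B_1)$ are constant near $0$. Hypothesis (iii) then feeds in: for $0 < |\lambda| < \delta$ we have $\beta(A-\lambda) = \alpha(B-\lambda)$, and since $A_2 - \lambda$, $B_2 - \lambda$ are invertible for $\lambda \neq 0$, this says $\beta(A_1 - \lambda) = \alpha(B_1 - \lambda)$; combining, $\beta(A_1) = \alpha(B_1)$.

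With $\beta(A_1) = \alpha(B_1) =: d$ (a cardinal, finite or countably infinite by separability), I fix an isomorphism (or isometry) $J\colon N(B_1) \to \HH_1 \ominus R(A_1) = R(A_1)^{\perp}$ and define $C_1 \in L(\KK_1, \HH_1)$ to be $J$ on $N(B_1)$ and $0$ on a complement of $N(B_1)$ in $\KK_1$. One then checks directly that $M_{C_1}$ is bounded below: if $M_{C_1}(x,y) = 0$ then $B_1 y = 0$ so $y \in N(B_1)$, and $A_1 x = -C_1 y = -Jy$; but $Jy \in R(A_1)^{\perp}$ while $A_1 x \in R(A_1)$, forcing $Jy = 0$, hence $y = 0$ (as $J$ is injective) and then $x = 0$ (as $A_1$ is injective). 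Surjectivity of $M_{C_1}$: given $(u,v) \in \HH_1 \oplus \KK_1$, pick $y$ with $B_1 y = v$; adjusting $y$ by an element of $N(B_1)$, we can use the $J$-block of $C_1$ to hit the $R(A_1)^{\perp}$-component of $u - C_1 y$, while $A_1$ takes care of the $R(A_1)$-component — so $M_{C_1}$ is onto. Hence $M_{C_1}$ is invertible, and a fortiori $0 \notin \acc\,\sigma(M_{C_1})$, so $M_{C_1}$ is generalized Drazin invertible. Therefore $M_C = M_{C_1} \oplus (A_2 \oplus B_2)$ is a direct sum of a generalized Drazin invertible operator and a quasinilpotent one, which is generalized Drazin invertible by the Koliha characterization \cite{koliha}.

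The main obstacle is the bookkeeping around the possibly-infinite dimension $d$ and making sure the chosen complements behave: on a general Hilbert space one uses orthogonal complements freely, but one must confirm $C_1$ is bounded (it is, being the composition of the bounded projection onto $N(B_1)$ with the isometry $J$) and that the range-splitting argument for surjectivity genuinely closes. A secondary subtlety is verifying that separability of $\KK$ (hence of $N(B_1)$) and of $\HH$ (hence of $R(A_1)^\perp$) is exactly what licenses the existence of the isomorphism $J$ when $d$ is infinite; this is where the Hilbert-space, separable hypothesis is used, as opposed to the Banach-space setting of the later results.
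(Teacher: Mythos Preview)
Your proof is correct and follows essentially the same route as the paper: decompose along $(\HH_1\oplus\KK_1)\oplus(\HH_2\oplus\KK_2)$, use quasinilpotence of $A_2,B_2$ together with Lemma~\ref{lema1} to deduce $\beta(A_1)=\alpha(B_1)$ from condition (iii), and then build $C=C_1\oplus 0$ so that $M_C$ is invertible $\oplus$ quasinilpotent. The only difference is cosmetic: where the paper cites \cite[Theorem~2]{Du} for the existence of $C_1$ making $\left(\begin{smallmatrix}A_1 & C_1\\ 0 & B_1\end{smallmatrix}\right)$ invertible, you construct $C_1$ explicitly via an isomorphism $J:N(B_1)\to R(A_1)^{\perp}$ and verify invertibility by hand.
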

\begin{proof}
By assumption, there exist closed $A$-invariant subspaces $\HH_1$
and $\HH_2$ of $\HH$ such that $\HH_1 \oplus \HH_2=\HH$,
$A_{\HH_1}=A_1$ is bounded below and $A_{\HH_2}=A_2$ is
quasinilpotent. Also, there exist closed $B$-invariant subspaces
$\KK_1$ and $\KK_2$ of $\KK$ such that $\KK_1 \oplus \KK_2=\KK$,
$B_{\KK_1}=B_1$ is surjective and $B_{\KK_2}=B_2$ is quasinilpotent.
It is clear that
$\beta(A-\lambda)=\beta(A_1-\lambda)+\beta(A_2-\lambda)$ and
$\alpha(B-\lambda)=\alpha(B_1-\lambda)+\alpha(B_2-\lambda)$ for
every $\lambda \in \CC$. Since $A_2$ and $B_2$ are quasinilpotent we
have
\begin{align}
\beta(A-\lambda)=\beta(A_1-\lambda) \label{beta}, \\
\alpha(B-\lambda)=\alpha(B_1-\lambda), \label{alpha}
\end{align}
for every $\lambda \in \CC \setminus \{0\}$. Further, according to
Lemma ˜\ref{lema1} there exists $\epsilon >0$ such that
\begin{equation}\label{const}
\beta(A_1-\lambda) \; \; \text{and} \; \; \alpha(B_1-\lambda) \; \;
\text{are constant for} \; \; |\lambda|<\epsilon.
\end{equation}
Consider $\lambda_0 \in \CC$ such that
$0<|\lambda_0|<\min\{\epsilon, \delta\}$, where $\delta$ is from the
condition (iii). Using \eqref{beta}, \eqref{alpha}, \eqref{const}
and the condition (iii) we obtain
\[\beta(A_1)=\beta(A_1-\lambda_0)=\beta(A-\lambda_0)=\alpha(B-\lambda_0)=\alpha(B_1-\lambda_0)=\alpha(B_1).\]

On the other hand, it is easy to see that $\left( \begin{array}{c} \HH_1 \\
\KK_1 \end{array} \right)$ and $\left( \begin{array}{c} \HH_2 \\
\KK_2 \end{array} \right)$ are closed subspaces of $\left( \begin{array}{c} \HH \\
\KK \end{array} \right)$ and that $\left( \begin{array}{c} \HH_1 \\
\KK_1 \end{array} \right) \oplus \left( \begin{array}{c} \HH_2 \\
\KK_2 \end{array} \right)=\left( \begin{array}{c} \HH \\
\KK \end{array} \right)$. $\HH_1$ and $\KK_1$ are separable Hilbert
spaces in their own right, so from \cite[Theorem 2]{Du} it follows
that there exists an operator $C_1 \in L(\KK_1, \HH_1)$ such that
the operator $\left(
\begin{array}{cc} A_1 & C_1 \\
0 & B_1 \end{array} \right): \left( \begin{array}{c} \HH_1 \\
\KK_1 \end{array} \right) \to \left( \begin{array}{c} \HH_1 \\
\KK_1 \end{array} \right)$ is invertible. Let define an operator $C
\in L(\KK, \HH)$ by
\[C=\left( \begin{array}{cc} C_1 & 0 \\
0 & 0 \end{array} \right): \left( \begin{array}{c} \KK_1 \\
\KK_2 \end{array} \right) \to \left( \begin{array}{c} \HH_1 \\
\HH_2 \end{array} \right).\] An easy computation shows that $\left( \begin{array}{c} \HH_1 \\
\KK_1 \end{array} \right)$ and $\left( \begin{array}{c} \HH_2 \\
\KK_2 \end{array} \right)$ are invariant for $M_C=\left( \begin{array}{cc} A & C \\
0 & B \end{array} \right)$ and also
\begin{align*}
(M_C)_{\HH_1 \oplus \KK_1}=\left( \begin{array}{cc} A_1 & C_1 \\
0 & B_1 \end{array} \right), \\
(M_C)_{\HH_2 \oplus \KK_2}=\left( \begin{array}{cc} A_2 & 0\\
0 & B_2 \end{array} \right).
\end{align*}
Since $A_2$ and $B_2$ are quasinilpotent, from $\sigma((M_C)_{\HH_2
\oplus \KK_2})=\sigma(A_2) \cup \sigma(B_2)=\{0\}$, it follows that
$(M_C)_{\HH_2 \oplus \KK_2}$ is quasinilpotent. Finally,
$(M_C)_{\HH_1 \oplus \KK_1}$ is invertible and thus $M_C$ is
generalized Drazin invertible.
\end{proof}

\noindent Using Theorem ˜\ref{GD} we obtain \cite[Theorem
2.1]{maroko} in a simpler way.

\begin{theorem} [\cite{maroko}] \label{D}
Let $A \in L(\HH)$ and $B \in L(\KK)$ be given operators on
separable Hilbert spaces $\HH$ and $\KK$, respectively, such
that:\par

\medskip

\noindent {\rm (i)} $A$ is left Drazin invertible;
\par \noindent{\rm (ii)} $B$ is right Drazin invertible; \par \noindent {\rm (iii)} There exists a constant $\delta>0$ such
that $\beta(A-\lambda)=\alpha(B-\lambda)$ for every $\lambda \in
\CC$ such that $0<|\lambda|<\delta$. \par

\smallskip

\noindent Then there exists an operator $C \in L(\KK, \HH)$ such
that $M_C$ is Drazin invertible.
\end{theorem}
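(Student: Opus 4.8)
The plan is to deduce Theorem \ref{D} from Theorem \ref{GD}, the only new observation being that under the sharper ``Drazin'' hypotheses the quasinilpotent summands manufactured in the proof of Theorem \ref{GD} are in fact nilpotent.

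The first step is to rephrase the hypotheses in the form required by Theorem \ref{GD}. Since $A$ is left Drazin invertible, i.e. $\asc(A)<\infty$ and $R(A^{\asc(A)+1})$ is closed, and since $A$ acts on a Hilbert space, it is well known that $A$ is of Kato type and that $0 \notin \acc\,\sigma_{ap}(A)$; hence Lemma \ref{lema3}(b) yields $(M,N) \in Red(A)$ with $A_M$ bounded below and $A_N$ nilpotent, that is, $A = A_1 \oplus A_2$ with $A_1$ bounded below and $A_2$ nilpotent. In the same way, right Drazin invertibility of $B$ gives $B = B_1 \oplus B_2$ with $B_1$ surjective and $B_2$ nilpotent. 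Since nilpotent operators are quasinilpotent, conditions (i) and (ii) of Theorem \ref{GD} now hold for $A$ and $B$, while condition (iii) is word for word the present condition (iii).

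The second step is to apply not merely the statement but the construction of Theorem \ref{GD}. With the decompositions just obtained, that construction produces an operator $C \in L(\KK, \HH)$ such that $\HH_1 \oplus \KK_1$ and $\HH_2 \oplus \KK_2$ completely reduce $M_C$, with $(M_C)_{\HH_1 \oplus \KK_1}$ invertible and $(M_C)_{\HH_2 \oplus \KK_2} = A_2 \oplus B_2$. The key point is that here $A_2 \oplus B_2$ is nilpotent, not merely quasinilpotent: if $A_2^m = 0$ and $B_2^n = 0$ then $(A_2 \oplus B_2)^{\max\{m,n\}} = 0$. Consequently $M_C = (M_C)_{\HH_1 \oplus \KK_1} \oplus (M_C)_{\HH_2 \oplus \KK_2}$ is the direct sum of an invertible operator and a nilpotent one, hence Drazin invertible by the classical characterization of Drazin invertibility recalled in Section~1.

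I do not anticipate a genuine obstacle once Theorem \ref{GD} is available: the remainder is bookkeeping. The single point requiring care is the structural reduction in the first step, namely that on a separable Hilbert space a left, resp. right, Drazin invertible operator splits as bounded below, resp. surjective, plus nilpotent; this is exactly what lets Lemma \ref{lema3}(b) take over, and it is this reduction that replaces the more computational arguments of \cite{maroko}.
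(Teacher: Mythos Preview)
Your proposal is correct and follows essentially the same route as the paper: obtain decompositions $A=A_1\oplus A_2$ and $B=B_1\oplus B_2$ with $A_2,B_2$ nilpotent, feed these into the construction of Theorem~\ref{GD}, and observe that the resulting $(M_C)_{\HH_2\oplus\KK_2}=\left(\begin{smallmatrix}A_2&0\\0&B_2\end{smallmatrix}\right)$ is nilpotent rather than merely quasinilpotent. The only cosmetic difference is in the first step: the paper cites \cite[Theorem~3.12]{Ber0} directly to produce the bounded-below/surjective plus nilpotent splittings, whereas you route through Lemma~\ref{lema3}(b) after asserting that left/right Drazin invertibility gives Kato type together with $0\notin\acc\,\sigma_{ap}(A)$ (resp.\ $0\notin\acc\,\sigma_{su}(B)$); that assertion is true but is itself most cleanly justified by Berkani's result, so the paper's citation is the more economical choice (and the Hilbert-space hypothesis plays no role at this stage).
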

\begin{proof}
By \cite[Theorem 3.12]{Ber0} it follows that there exist pairs
$(\HH_1, \HH_2) \in Red(A)$ and $(\KK_1, \KK_2) \in Red(B)$ such
that $A_{\HH_1}=A_1$ is bounded below, $B_{\KK_1}=B_1$ is
surjective, $A_{\HH_2}=A_2$ and $B_{\KK_2}=B_2$ are nilpotent. From
the proof of Theorem ˜\ref{GD} we conclude that there exists $C \in
L(\KK, \HH)$ such that
\begin{align*}
M_C=(M_C)_{\HH_1 \oplus \KK_1} \oplus (M_C)_{\HH_2 \oplus \KK_2}, \\
(M_C)_{\HH_1 \oplus \KK_1} \; \text{is invertible},\\
(M_C)_{\HH_2 \oplus \KK_2}=\left( \begin{array}{cc} A_2 & 0\\
0 & B_2 \end{array} \right).
\end{align*}
For sufficiently large $n \in \NN$ we have
\[\left( \begin{array}{cc} A_2 & 0\\
0 & B_2 \end{array} \right)^n=\left( \begin{array}{cc} (A_2)^n & 0\\
0 & (B_2)^n \end{array} \right)=\left( \begin{array}{cc} 0 & 0\\
0 & 0 \end{array} \right). \] It means that $(M_C)_{\HH_2 \oplus
\KK_2}$ is nilpotent, and the proof is complete.
\end{proof}

Under the additional assumptions the converse implications in
Theorem ˜\ref{GD} and Theorem ˜\ref{D} are also true even in the
context of Banach spaces.

\begin{theorem} \label{converse1}
Let $A \in L(\XX)$ and $B \in L(\YY)$ admit a GKD. If there exists
some $C \in L(\YY, \XX)$ such that $M_C$ is generalized Drazin
invertible, then the following holds:

\noindent {\rm (i)} $A=A_1 \oplus A_2$, $A_1$ is bounded below and
$A_2$ is quasinilpotent;
\par \noindent{\rm (ii)} $B=B_1 \oplus B_2$, $B_1$ is surjective and $B_2$ is quasinilpotent; \par \noindent {\rm (iii)} There exists
a constant $\delta>0$ such that $\beta(A-\lambda)=\alpha(B-\lambda)$
for every $\lambda \in \CC$ such that $0<|\lambda|<\delta$. \par
\end{theorem}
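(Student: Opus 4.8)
The plan is to exploit the generalized Drazin invertibility of $M_C$ to control the behaviour of $\alpha(M_C - \lambda)$ and $\beta(M_C - \lambda)$ near $\lambda = 0$, and then push this information down to $A$ and $B$ by using the upper‑triangular structure together with the GKD hypotheses. Since $M_C$ is generalized Drazin invertible, $0 \notin \acc\,\sigma(M_C)$, so there is $\delta_0 > 0$ with $M_C - \lambda$ invertible for $0 < |\lambda| < \delta_0$; in particular $\alpha(M_C - \lambda) = \beta(M_C - \lambda) = 0$ on this punctured disc. For an upper triangular $M_C - \lambda = \left(\begin{smallmatrix} A-\lambda & C \\ 0 & B-\lambda\end{smallmatrix}\right)$ one has the elementary relations $\alpha(M_C-\lambda) \geq \alpha(A-\lambda)$ and $\beta(M_C-\lambda) \geq \beta(B-\lambda)$, and also $\alpha(B - \lambda) \leq \beta(A-\lambda) + \alpha(M_C-\lambda)$, $\beta(A-\lambda) \leq \alpha(B-\lambda) + \beta(M_C - \lambda)$ (these follow by chasing the two short exact sequences $0 \to N(A-\lambda) \to N(M_C-\lambda) \to N(B-\lambda)$ and the dual one). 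Feeding in $\alpha(M_C-\lambda) = \beta(M_C-\lambda) = 0$ gives, for $0 < |\lambda| < \delta_0$,
\[
\alpha(A-\lambda) = 0, \qquad \beta(B-\lambda) = 0, \qquad \beta(A-\lambda) = \alpha(B-\lambda).
\]
This already yields (iii) with $\delta = \delta_0$, and it tells us that $A - \lambda$ is bounded below and $B - \lambda$ is surjective for all small nonzero $\lambda$, i.e.\ $0 \notin \acc\,\sigma_{ap}(A)$ and $0 \notin \acc\,\sigma_{su}(B)$.

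Now I invoke the GKD hypotheses. Since $A$ admits a GKD and $0 \notin \acc\,\sigma_{ap}(A)$, Lemma \ref{lema3}(a) (the "bounded below" version) gives a pair $(M,N) \in Red(A)$ with $A_M$ bounded below and $A_N$ quasinilpotent; setting $A_1 = A_M$, $A_2 = A_N$ gives (i). Dually, since $B$ admits a GKD and $0 \notin \acc\,\sigma_{su}(B)$, Lemma \ref{lema3}(a) (the "surjective" version) gives a pair $(P,Q) \in Red(B)$ with $B_P$ surjective and $B_Q$ quasinilpotent; setting $B_1 = B_P$, $B_2 = B_Q$ gives (ii). Thus all three conclusions follow.

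The main obstacle is establishing the kernel/range inequalities for the upper triangular matrix cleanly, and in particular the mixed inequalities $\alpha(B-\lambda) \leq \beta(A-\lambda) + \alpha(M_C-\lambda)$ and its dual — these are what force the equality $\beta(A-\lambda) = \alpha(B-\lambda)$ out of the invertibility of $M_C - \lambda$, and they require a careful diagram chase (or, equivalently, an argument with the induced map $R(M_C - \lambda) \cap (\XX \oplus 0)$ versus $N(B-\lambda)$). A secondary point worth checking is that one is free to shrink $\delta_0$ so that it lies below the radius on which $M_C - \lambda$ is invertible; this is immediate since $0 \notin \acc\,\sigma(M_C)$. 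Once these relations are in hand, everything else is a direct citation of Lemma \ref{lema3}.
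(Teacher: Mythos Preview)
Your overall strategy coincides with the paper's: from generalized Drazin invertibility of $M_C$ you extract that $M_C-\lambda$ is invertible on a punctured disc, deduce (iii) together with $0\notin\acc\,\sigma_{ap}(A)$ and $0\notin\acc\,\sigma_{su}(B)$, and then apply Lemma~\ref{lema3}(a) to obtain (i) and (ii). The paper does exactly this, except that it outsources the middle step to \cite[Theorem~2]{Han} (which gives directly that $A-\lambda$ is left invertible, $B-\lambda$ is right invertible, and $\beta(A-\lambda)=\alpha(B-\lambda)$), whereas you unpack that result by hand via the kernel/cokernel inequalities.

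There is, however, one genuine slip in your unpacking. From $\alpha(A-\lambda)=0$ you conclude that $A-\lambda$ is bounded below, but injectivity alone does not give this in a Banach space: you also need $R(A-\lambda)$ closed. (By contrast, $\beta(B-\lambda)=0$ really does force $R(B-\lambda)=\YY$, so the surjectivity of $B-\lambda$ is fine.) The fix is immediate and is implicitly what \cite[Theorem~2]{Han} supplies: write $(M_C-\lambda)^{-1}=\left(\begin{smallmatrix}P&Q\\R&S\end{smallmatrix}\right)$ and read off the $(1,1)$ entry of $(M_C-\lambda)^{-1}(M_C-\lambda)=I$ to get $P(A-\lambda)=I_\XX$, so $A-\lambda$ is left invertible, hence bounded below. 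With this one line added (or with a direct citation of \cite[Theorem~2]{Han} in place of your diagram chase), the argument is complete and matches the paper's proof.
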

\begin{proof}
Let $M_C$ be generalized Drazin invertible for some $C \in L(\YY,
\XX)$. Then there exists $\delta >0$ such that $M_C-\lambda$ is
invertible for $0<|\lambda|<\delta$. According to \cite[Theorem
2]{Han} we have $0 \not \in \acc \; \sigma_l(A)$, $0 \not \in \acc
\; \sigma_r(B)$ and that the statement {\rm (iii)} is satisfied. It
means that also $0 \not \in \acc \; \sigma_{ap}(A)$ and $0 \not \in
\acc \; \sigma_{su}(B)$. By Lemma ˜\ref{lema3} we obtain that the
statements {\rm (i)} and {\rm (ii)} are also satisfied.
\end{proof}
\begin{theorem} \label{converse2}
Let $A \in L(\XX)$ be of Kato type and let $B \in L(\YY)$ admit a
GKD. If there exists some $C \in L(\YY, \XX)$ such that $M_C$ is
Drazin invertible, then the following holds:\par

\noindent {\rm (i)} $A$ is left Drazin invertible; \par
\noindent{\rm (ii)} $B$ is right Drazin invertible; \par \noindent
{\rm (iii)} There exists a constant $\delta>0$ such that
$\beta(A-\lambda)=\alpha(B-\lambda)$ for every $\lambda \in \CC$
such that $0<|\lambda|<\delta$. \par
\end{theorem}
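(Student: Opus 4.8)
The plan is to mirror the proof of Theorem \ref{converse1}, replacing each appeal to the ``generalized'' toolkit with its ``finite-ascent/descent'' analogue. First I would assume $M_C$ is Drazin invertible for some $C \in L(\YY, \XX)$. Since every Drazin invertible operator is in particular generalized Drazin invertible, Theorem \ref{converse1} (or directly \cite[Theorem 2]{Han}) already yields that $0 \notin \acc\, \sigma_l(A)$, $0 \notin \acc\, \sigma_r(B)$, and that condition {\rm (iii)} holds; so the whole content of the present theorem is upgrading the punctured-neighbourhood invertibility of $A$ and $B$ to the stronger finite ascent (resp.\ descent) plus closed-range conclusions of left (resp.\ right) Drazin invertibility. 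In particular {\rm (iii)} comes for free and I would dispose of it in one line.

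For {\rm (i)}, the key point is that $A$ is assumed to be of Kato type: there is a pair $(M, N) \in Red(A)$ with $A_M$ semi-regular and $A_N$ nilpotent. Since $0 \notin \acc\, \sigma_l(A) \subset \acc\, \sigma_{ap}(A)$, we actually have $0 \notin \acc\, \sigma_{ap}(A)$, so Lemma \ref{lema3}(b) applies: there exists $(\HH_1, \HH_2) \in Red(A)$ with $A_{\HH_1}$ bounded below and $A_{\HH_2}$ nilpotent. Bounded below implies $\asc(A_{\HH_1}) = 0$ with closed range, and $A_{\HH_2}$ nilpotent of index $k$ has $\asc(A_{\HH_2}) \le k$ with $R(A_{\HH_2}^{k+1}) = \{0\}$ closed; hence $\asc(A) = \max\{\asc(A_{\HH_1}), \asc(A_{\HH_2})\} < \infty$ and $R(A^{\asc(A)+1}) = R(A_{\HH_1}^{\asc(A)+1}) \oplus R(A_{\HH_2}^{\asc(A)+1})$ is a direct sum of two closed subspaces, hence closed. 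Therefore $A$ is left Drazin invertible. The argument for {\rm (ii)} is the dual: $0 \notin \acc\, \sigma_r(B) \subset \acc\, \sigma_{su}(B)$, and since $B$ admits a GKD, Lemma \ref{lema3}(a) gives $(\KK_1, \KK_2) \in Red(B)$ with $B_{\KK_1}$ surjective and $B_{\KK_2}$ quasinilpotent; one checks $\dsc(B) < \infty$ from $\dsc(B_{\KK_1}) = 0$ and $\dsc$ of the quasinilpotent summand being finite — but here one must be a little careful, since a quasinilpotent operator need not have finite descent, so the GKD hypothesis alone may not suffice for {\rm (ii)}.

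I expect the descent half of {\rm (ii)} to be the main obstacle, and the fix is to observe that one should use the \emph{Kato type} structure on the $B$-side too, or invoke a result like \cite[Theorem 3.12]{Ber0} (already used in the proof of Theorem \ref{D}) guaranteeing that the quasinilpotent summand is in fact nilpotent when $0 \notin \acc\, \sigma_{su}(B)$ and $B$ is of Kato type. Assuming $B$ is of Kato type, Lemma \ref{lema3}(b) gives $B_{\KK_1}$ surjective and $B_{\KK_2}$ nilpotent; then $\dsc(B_{\KK_1}) = 0$ with $R(B_{\KK_1}) = \KK_1$ closed, $\dsc(B_{\KK_2}) \le k$ with $R(B_{\KK_2}^{k}) = \{0\}$ closed, so $\dsc(B) = \max\{\dsc(B_{\KK_1}), \dsc(B_{\KK_2})\} < \infty$ and $R(B^{\dsc(B)})$ is closed as a direct sum of closed subspaces. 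Hence $B$ is right Drazin invertible. So I would either strengthen the hypothesis on $B$ to ``Kato type'' to match Theorem \ref{D}, or, keeping the ``GKD'' hypothesis, note that the quasinilpotent part is automatically nilpotent here by combining the punctured-disc invertibility with \cite[Theorem 3.12]{Ber0}; in the write-up I would flag this point explicitly rather than letting it pass as routine.
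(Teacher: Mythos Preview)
Your treatment of {\rm (iii)} and {\rm (i)} is essentially the paper's: invoke \cite[Theorem 2]{Han} to get {\rm (iii)} and $0\notin\acc\,\sigma_{ap}(A)$, then use Lemma \ref{lema3}(b) and the Kato type hypothesis on $A$ to produce a bounded-below $\oplus$ nilpotent decomposition, from which left Drazin invertibility follows by the direct-sum computation you sketch. (A minor slip: the set inclusions you wrote are reversed---one has $\sigma_{ap}(A)\subset\sigma_l(A)$ and $\sigma_{su}(B)\subset\sigma_r(B)$---but the implications you actually need, $0\notin\acc\,\sigma_l(A)\Rightarrow 0\notin\acc\,\sigma_{ap}(A)$ and the dual for $B$, are correct.)

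The genuine gap is exactly where you suspected, in {\rm (ii)}, and neither of your proposed fixes closes it. Strengthening the hypothesis on $B$ to Kato type changes the theorem; and \cite[Theorem 3.12]{Ber0} goes in the wrong direction (it deduces a decomposition \emph{from} left/right Drazin invertibility, which is what you are trying to prove). The point you are missing is that you have not yet used the full strength of the hypothesis ``$M_C$ is Drazin invertible'' as opposed to merely generalized Drazin invertible. The paper exploits this via \cite[Lemma 2.6]{Zhong}: Drazin invertibility of $M_C$ forces $\dsc(B)<\infty$ directly, as a structural fact about upper triangular operator matrices. Once you have $\dsc(B)<\infty$, the quasinilpotent summand $B_2$ coming from Lemma \ref{lema3}(a) inherits finite descent, and a quasinilpotent operator with finite descent is nilpotent (\cite[Corollary 10.6, p.~332]{TL}). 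With $B_2$ nilpotent in hand, the direct-sum argument you wrote for $A$ dualises verbatim to give right Drazin invertibility of $B$. So the missing idea is precisely to extract $\dsc(B)<\infty$ from the Drazin invertibility of $M_C$ before attempting the decomposition argument.
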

\begin{proof}
Applying the same argument as in Theorem ˜\ref{converse1} we obtain
that the statement {\rm (iii)} holds and $0 \not \in \acc \;
\sigma_{ap}(A) \cup \acc \; \sigma_{su}(B)$. Now Lemma ˜\ref{lema3}
implies that there exist $(X_1, X_2) \in Red(A)$ and $(Y_1, Y_2) \in
Red(B)$ such that $A_1$ is bounded below, $A_2$ is nilpotent, $B_1$
is surjective and $B_2$ is quasinilpotent. Since $\dsc(B)<\infty$
\cite[Lemma 2.6]{Zhong}, then $\dsc(B_2)<\infty$, so $B_2$ is
quasinilpotent operator with finite descent. We conclude that $B_2$
is nilpotent by \cite[Corollary 10.6, p. 332]{TL}. Let $n \geq d$
where $d \in \NN$ is such that $(A_2)^d=0$ and $(A_2)^{d-1} \neq 0$.
We have
\begin{align*}
N(A^n)=N((A_1)^n) \oplus N((A_2)^n)=\XX_2, \\
N(A^{d-1})=N((A_1)^{d-1}) \oplus N((A_2)^{d-1})=N((A_2)^{d-1})
\subsetneq \XX_2.
\end{align*}
It follows that $\asc(A)=d<\infty$. From $R(A^n)=R((A_1)^n) \oplus
R((A_2)^n)=R((A_1)^n)$ we conclude that $R(A^n)$ is closed and
therefore $A$ is left Drazin invertible. In a similar way we prove
that $B$ is right Drazin invertible.
\end{proof}

\noindent An operator $T \in L(\XX)$ is {\em semi-Fredholm} if
$R(T)$ is closed and if $\alpha(T)$ or $\beta(T)$ is finite. The
class of semi-Fredholm operators belongs to the class of Kato type
operators \cite[Theorem 16.21]{Mu}. According to this observation it
seems that Theorem ˜\ref{converse2} is an extension of
\cite[Corollary 2.3]{maroko}.

If $\delta > 0$, set $\DD(0, \delta)=\{\lambda \in \CC:
|\lambda|<\delta\}$. The following theorem is our second main
result.

\begin{theorem} \label{KolihaDrazin}
Let $A \in L(\XX)$ and $B \in L(\YY)$ be given operators such that
$0 \not \in \acc \, \sigma_d(A)$ or $0 \not \in \acc \,
\sigma_p(B)$.

\noindent {\rm (i)} If there exists some $C \in L(\YY, \XX)$ such
that $M_C$ is generalized Drazin invertible, then $A$ and $B$ are
both generalized Drazin invertible.\par

\noindent {\rm (ii)} If there exists some $C \in L(\YY, \XX)$ such
that $M_C$ is Drazin invertible, then $A$ and $B$ are both Drazin
invertible.\par

\noindent {\rm (iii)} If there exists some $C \in L(\YY, \XX)$ such
that $M_C$ is invertible, then $A$ and $B$ are both invertible.
\end{theorem}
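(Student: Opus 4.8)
The plan is to prove (i), (ii) and (iii) in one stroke by controlling the operator-valued resolvent of $M_C$ near $\lambda=0$. I would begin with two elementary remarks about $M_C-\lambda$, regarded as the upper triangular operator matrix with diagonal entries $A-\lambda$, $B-\lambda$ and corner $C$. First, if $M_C-\lambda$ is invertible, then $A-\lambda$ is bounded below (both injectivity and closedness of $R(A-\lambda)$ are read off a bounded inverse of $M_C-\lambda$) and $B-\lambda$ is surjective; moreover Lemma~\ref{invertibility} applied to $M_C-\lambda$ gives that $A-\lambda$ is invertible if and only if $B-\lambda$ is invertible. Second, once $A-\lambda$ and $B-\lambda$ (hence $M_C-\lambda$) are all invertible, the usual block-triangular inversion formula holds, so the diagonal blocks of $(M_C-\lambda)^{-1}$ are exactly $(A-\lambda)^{-1}$ and $(B-\lambda)^{-1}$; composing with the bounded canonical injections and projections, those two resolvents inherit whatever local holomorphy or pole behaviour $\lambda\mapsto(M_C-\lambda)^{-1}$ has at $0$.

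Next, the common core. In all three cases $M_C$ is generalized Drazin invertible, so $0\notin\acc\,\sigma(M_C)$ and there is $\delta_0>0$ with $M_C-\lambda$ invertible for $0<|\lambda|<\delta_0$; by the first remark $A-\lambda$ is then bounded below and $B-\lambda$ surjective for $0<|\lambda|<\delta_0$. Now I use the hypothesis. If $0\notin\acc\,\sigma_d(A)$, I shrink $\delta_0$ to some $\delta>0$ so that $\overline{R(A-\lambda)}=\XX$ for $0<|\lambda|<\delta$; a bounded below operator with dense range is invertible, so $A-\lambda$ is invertible there, and then so is $B-\lambda$ by Lemma~\ref{invertibility}. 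If instead $0\notin\acc\,\sigma_p(B)$, then after shrinking $B-\lambda$ is injective and surjective, hence invertible, for $0<|\lambda|<\delta$, and then so is $A-\lambda$. In either case $A-\lambda$ and $B-\lambda$ are invertible for $0<|\lambda|<\delta$, so $0\notin\acc\,\sigma(A)$ and $0\notin\acc\,\sigma(B)$, which by Koliha's characterization of generalized Drazin invertibility is exactly (i).

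For the refinements I would feed this back into the second remark, now valid on the full punctured disc $0<|\lambda|<\delta$. For (iii): $M_C$ invertible means $(M_C-\lambda)^{-1}$ is holomorphic at $0$, hence $(A-\lambda)^{-1}$ and $(B-\lambda)^{-1}$ extend holomorphically across $0$ and $A$, $B$ are invertible. (Alternatively, $A$ is bounded below with $0\notin\acc\,\sigma(A)$, so \eqref{leftsemi} makes $A$ invertible, and then Lemma~\ref{invertibility} makes $B$ invertible; symmetrically one uses \eqref{rightsemi}.) For (ii): writing $M_C=M_1\oplus M_2$ with $M_1$ invertible and $M_2$ nilpotent of index $k$, the map $\lambda^k(M_C-\lambda)^{-1}$ extends holomorphically across $0$, hence so do $\lambda^k(A-\lambda)^{-1}$ and $\lambda^k(B-\lambda)^{-1}$; thus $0$ is at worst a pole of the resolvents of $A$ and of $B$. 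As $0$ is isolated in (or absent from) $\sigma(A)$ and $\sigma(B)$, the Riesz projection at $0$ splits $A$ (resp.\ $B$) into an invertible summand and a quasinilpotent summand, and the finite pole order forces the quasinilpotent summand to be nilpotent; so $A$ and $B$ are Drazin invertible.

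The step I expect to be the real obstacle is exactly the one in (ii): since invertibility of $M_C-\lambda$ on its own only delivers ``bounded below'' for $A-\lambda$ and ``surjective'' for $B-\lambda$, the block inversion formula is not available a priori, and Drazin invertibility of $A$ need not follow from that of $M_C$ in general (it already fails for invertible $M_C$, e.g.\ with a forward shift as $A$). What makes the argument work is that the hypothesis $0\notin\acc\,\sigma_d(A)$ or $0\notin\acc\,\sigma_p(B)$ promotes one of $A-\lambda$, $B-\lambda$ --- and then, through Lemma~\ref{invertibility}, the other --- to genuine invertibility on a punctured disc, after which the resolvent-transfer mechanism applies. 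A more elementary attack on (ii), extracting $\asc(A)<\infty$ and $\dsc(B)<\infty$ from $\asc(M_C),\dsc(M_C)<\infty$, does not suffice, since a quasinilpotent operator of finite ascent need not be nilpotent; the resolvent/pole route is the clean one.
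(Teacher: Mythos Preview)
Your argument is correct. Parts (i) and (iii) coincide with the paper's proof: the paper also deduces invertibility of $A-\lambda$ on a punctured disc from left invertibility (via \cite[Theorem 2]{Han}; your ``bounded below'' is the same step) together with the dense-range hypothesis, then transfers to $B-\lambda$ through Lemma~\ref{invertibility}, and finishes (iii) exactly by your alternative route through \eqref{leftsemi} and \eqref{rightsemi}. The genuine divergence is in (ii). The paper does not use your resolvent--pole transfer; instead it extracts $\dsc(B)<\infty$ from Drazin invertibility of $M_C$ (\cite[Lemma 2.6]{Zhong}), combines this with $0\notin\acc\,\sigma(B)$ from part (i) to conclude that $B$ is Drazin invertible (\cite[Corollary 1.6]{dsc}), and then invokes \cite[Lemma 2.7]{Zhong} to pull $A$ along. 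So your closing remark that the ``elementary attack'' via ascent/descent fails is only half right: finite \emph{ascent} of a quasinilpotent indeed does not force nilpotency (Volterra), but finite \emph{descent} does, and the paper exploits precisely this asymmetry by handling $B$ first. Your resolvent route has the advantage of being self-contained---no external lemmas about descent or about Drazin spectra of triangular matrices are needed, and the pole order even yields a bound on the Drazin indices of $A$ and $B$ in terms of that of $M_C$---while the paper's route stays closer to the algebraic ascent/descent characterisation and the existing literature on operator matrices.
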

\begin{proof}
{\rm (i)}. Suppose that $0 \not \in \acc \, \sigma_d(A)$ and that
$M_C$ is generalized Drazin invertible for some $C \in L(\YY, \XX)$.
Since $0 \not \in \acc \, \sigma(M_C)$ then there exists $\delta >
0$ such that
\begin{align}
M_C-\lambda \; \text{is invertible}, \label{eq1}\\
\overline{R(A-\lambda)}=\XX, \label{eq2}
\end{align}
for every $\lambda \in \DD(0, \delta)\setminus \{0\}$. From
\cite[Theorem 2]{Han} it follows that $A-\lambda$ is left invertible
for every $\lambda \in \DD(0, \delta)\setminus \{0\}$. Since
$A-\lambda$ has closed range for every $\lambda \in \DD(0,
\delta)\setminus \{0\}$, from ˜\eqref{eq2} we conclude
$R(A-\lambda)=\overline{R(A-\lambda)}=\XX$ for $\lambda \in \DD(0,
\delta)\setminus \{0\}$, i.e. $A-\lambda$ is surjective for $\lambda
\in \DD(0, \delta)\setminus \{0\}$. Moreover, $A-\lambda$ is
injective for $\lambda \in \DD(0, \delta)\setminus \{0\}$, hence
$A-\lambda$ is invertible for $\lambda \in \DD(0, \delta)\setminus
\{0\}$. It means that $0 \not \in \acc \, \sigma(A)$, so $A$ is
generalized Drazin invertible. Now, \eqref{eq1} and Lemma
˜\ref{invertibility} imply that $B$ is generalized Drazin
invertible.

Assume that $0 \not \in \acc \, \sigma_p(B)$ and that $M_C$ is
generalized Drazin invertible for some $C \in L(\YY, \XX)$. There
exists $\delta>0$ such that ˜\eqref{eq1} holds and
\begin{equation}\label{eq3}
B-\lambda \; \text{is injective for} \; \lambda \in \DD(0,
\delta)\setminus \{0\}.
\end{equation}
$B$ is generalized Drazin invertible by ˜\eqref{eq1}, ˜\eqref{eq3}
and \cite[Theorem 2]{Han}. We apply Lemma ˜\ref{invertibility} again
and obtain that $A$ is generalized Drazin invertible.\par

\smallskip

\noindent{\rm (ii)}. If there exists $C \in L(\YY,\XX)$ such that
$M_C$ is Drazin invertible, then $M_C$ is also generalized Drazin
invertible. According to the part (i) it follows that $B$ is
generalized Drazin invertible, i.e. $0 \not \in \acc \, \sigma(B)$.
Further, $\dsc(B)<\infty$ by \cite[Lemma 2.6]{Zhong}, so from
\cite[Corollary 1.6]{dsc} it follows that $B$ is Drazin invertible.
We apply \cite[Lemma 2.7]{Zhong} and obtain that $A$ is also Drazin
invertible.
\par

\smallskip

\noindent{\rm (iii}). $A$ is left invertible and $B$ is right
invertible by \cite[Theorem 2]{Han}. On the other hand, the part (i)
now leads to $0 \not \in \acc \, \sigma(A)$ and $0 \not \in \acc \,
\sigma(B)$. The result follows from \eqref{leftsemi} and
\eqref{rightsemi}.
\end{proof}

\noindent The part {\rm (ii)} of Theorem ˜\ref{KolihaDrazin} is also
an extension of \cite[Corollary 2.3]{maroko}. The following result
is an immediate consequence of Theorem ˜\ref{KolihaDrazin}. The
second inclusion of Corollary ˜\ref{inkluzije} is proved in
\cite[Corollary 4]{Han}, \cite[Theorem 5.1]{Dragan} and \cite[Lemma
2.3]{Dragan1}.

\begin{corollary} \label{inkluzije}
Let $A \in L(\XX)$ and $B \in L(\YY)$. If $\sigma_{\ast} \in
\{\sigma, \sigma_{D}, \sigma_{gD}\}$, then we have
\[(\sigma_{\ast}(A) \cup \sigma_{\ast}(B))\setminus (\acc \,
\sigma_d(A) \cap \acc \, \sigma_p(B)) \subset \sigma_{\ast}(M_C)
\subset \sigma_{\ast}(A) \cup \sigma_{\ast}(B) \] for every $C \in
L(\YY, \XX)$.
\end{corollary}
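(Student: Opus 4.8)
The plan is to derive Corollary~\ref{inkluzije} directly from Theorem~\ref{KolihaDrazin} together with Lemma~\ref{invertibility} and the analogous facts for Drazin and generalized Drazin invertibility of block upper triangular matrices. The statement consists of two inclusions, and I would handle them separately. For the \emph{second} inclusion, $\sigma_\ast(M_C)\subset\sigma_\ast(A)\cup\sigma_\ast(B)$, the key observation is that $M_C-\lambda=\left(\begin{array}{cc} A-\lambda & C\\ 0 & B-\lambda\end{array}\right)$, so if $\lambda\notin\sigma_\ast(A)\cup\sigma_\ast(B)$ then $A-\lambda$ and $B-\lambda$ are both $\ast$-invertible; one then checks that the block matrix inherits the same property. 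For $\sigma_\ast=\sigma$ this is exactly the first part of Lemma~\ref{invertibility}. For $\sigma_\ast=\sigma_{gD}$ and $\sigma_\ast=\sigma_D$ one uses the decompositions $A-\lambda=A_1'\oplus A_2'$ and $B-\lambda=B_1'\oplus B_2'$ with $A_1',B_1'$ invertible and $A_2',B_2'$ quasinilpotent (resp.\ nilpotent), assembles a $C$-compatible reducing decomposition of $M_C-\lambda$ exactly as in the proof of Theorem~\ref{GD} (choosing the off-diagonal corrector on the invertible summand via \cite[Theorem 2]{Du}), and concludes that $M_C-\lambda$ splits as an invertible part plus a quasinilpotent (resp.\ nilpotent) part. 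Since this inclusion is already recorded in \cite[Corollary 4]{Han}, \cite[Theorem 5.1]{Dragan}, \cite[Lemma 2.3]{Dragan1}, I would simply cite it rather than reprove it.

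For the \emph{first} inclusion, the contrapositive is the natural route: I want to show that if $\lambda\notin\sigma_\ast(M_C)$ and $\lambda\notin\acc\,\sigma_d(A)\cap\acc\,\sigma_p(B)$, then $\lambda\notin\sigma_\ast(A)\cup\sigma_\ast(B)$. Replacing $A,B,M_C$ by $A-\lambda,B-\lambda,M_C-\lambda$ (note $(M_C)-\lambda=M_C'$ where $M_C'$ is the triangular matrix built from $A-\lambda,B-\lambda$ with the same corner $C$), it suffices to treat $\lambda=0$. So assume $M_C$ is $\ast$-invertible and $0\notin\acc\,\sigma_d(A)\cap\acc\,\sigma_p(B)$; the latter means $0\notin\acc\,\sigma_d(A)$ \emph{or} $0\notin\acc\,\sigma_p(B)$, which is precisely the hypothesis of Theorem~\ref{KolihaDrazin}. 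Applying part (iii), (ii) or (i) of that theorem according to whether $\ast$ is ordinary, Drazin or generalized Drazin invertibility, we get that $A$ and $B$ are both $\ast$-invertible, i.e.\ $0\notin\sigma_\ast(A)\cup\sigma_\ast(B)$, which is what we wanted.

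The only mild subtlety is the bookkeeping that $\sigma_\ast(M_{C})$ at the point $\lambda$ corresponds to $\ast$-invertibility of the shifted matrix $M_C-\lambda$, and that this shifted matrix is again an upper triangular operator matrix of the form $\left(\begin{array}{cc}A-\lambda & C\\ 0 & B-\lambda\end{array}\right)$ with the \emph{same} corner operator $C$; once this is noted, both inclusions reduce verbatim to results already proved for the base point $0$. I expect no real obstacle here: the corollary is a routine translation of Theorem~\ref{KolihaDrazin} (for the left inclusion) and of the classical upper-triangular decomposition results (for the right inclusion). If anything, the point requiring a sentence of care is that the three classes $\sigma,\sigma_D,\sigma_{gD}$ can be handled uniformly—this is clean because in each case $\ast$-invertibility is ``invertible $\oplus$ (nilpotent or quasinilpotent or zero)'', and direct sums of such operators stay in the class, which is exactly the structure exploited in the proofs of Theorems~\ref{GD}, \ref{D} and \ref{KolihaDrazin}.
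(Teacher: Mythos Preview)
Your proposal is correct and follows exactly the paper's route: the first inclusion is the contrapositive of Theorem~\ref{KolihaDrazin} applied at the shifted point, and the second inclusion is simply cited from \cite{Han,Dragan,Dragan1}. One caveat worth flagging: the parenthetical sketch you give for the second inclusion in the Drazin and generalized Drazin cases does \emph{not} work as written. You suggest ``assembling a $C$-compatible reducing decomposition of $M_C-\lambda$ exactly as in the proof of Theorem~\ref{GD} (choosing the off-diagonal corrector on the invertible summand via \cite[Theorem 2]{Du})'', but in Theorem~\ref{GD} the corner operator is \emph{constructed}, whereas here $C$ is an arbitrary given operator; in general $C$ need not map $\KK_1$ into $\HH_1$ and $\KK_2$ into $\HH_2$, so $\HH_1\oplus\KK_1$ and $\HH_2\oplus\KK_2$ are not $M_C$-invariant. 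The honest argument for arbitrary $C$ is the spectral one: if $0\notin\acc\,\sigma(A-\lambda)$ and $0\notin\acc\,\sigma(B-\lambda)$, then Lemma~\ref{invertibility} gives invertibility of $M_C-\lambda-\mu$ for all small $\mu\neq 0$, whence $0\notin\acc\,\sigma(M_C-\lambda)$. Since you plan to cite the result anyway, this does not affect your proof, but the sketch should not be carried forward.
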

\begin{remark} \label{remark}
{\em Let $T \in L(\XX)$. Inclusions $\acc \, \sigma_p(T) \subset
\acc \, \sigma_l(T) \subset \acc \, \sigma(T) \subset \sigma(T)$ and
$\acc \, \sigma_d(T) \subset \acc \, \sigma_r(T) \subset \acc \,
\sigma(T) \subset \sigma(T)$ are clear. According to \cite[Theorem
12(iii)]{Bo} and from the fact that the Drazin spectrum is compact
we have $\acc \, \sigma_p(T) \subset \acc \, \sigma(T)=\acc \,
\sigma_D(T) \subset \sigma_D(T)$ and also $\acc \, \sigma_d(T)
\subset \acc \, \sigma(T)=\acc \, \sigma_D(T) \subset \sigma_D(T)$.
From this consideration we obtain the following statements for every
$A \in L(\XX)$ and $B \in L(\YY)$:

\noindent {\rm (i)} $\acc \, \sigma_d(A) \cap \acc \, \sigma_p(B)
\subset \sigma(A) \cap \sigma(B)$;\par \noindent {\rm (ii)} $\acc \,
\sigma_d(A) \cap \acc \, \sigma_p(B) \subset \sigma_D(A) \cap
\sigma_D(B)$;\par \noindent {\rm (iii)} $\acc \, \sigma_d(A) \cap
\acc \, \sigma_p(B) \subset \acc \, \sigma_r(A) \cap \acc \,
\sigma_l(B)\subset \sigma_r(A) \cap \sigma_l(B)$. }
\end{remark}
\noindent Remark ˜\ref{remark} shows that Corollary ˜\ref{inkluzije}
is an improvement of \cite[Corollary 4]{Han}, \cite[Corollary
3.17]{pseudoBW} and of the equation (1) in \cite{Zhong}. Another
extension of the equation (1) from \cite{Zhong} may be found in
\cite[Proposition 2.2]{maroko}.

We also generalize \cite[Corollary 2.6]{maroko}.

\begin{corollary} \label{cormar}
Let $A \in L(\XX)$, $B \in L(\YY)$ and let $\sigma_{\ast} \in
\{\sigma, \sigma_{D}, \sigma_{gD}\}$. If $\acc \, \sigma_d(A) \cap
\acc \, \sigma_p(B)=\emptyset$ then
\begin{equation} \label{formula}
\sigma_{\ast}(M_C)=\sigma_{\ast}(A) \cup \sigma_{\ast}(B) \; \;
\text{for every} \; \; C \in L(\YY, \XX).
\end{equation}
In particular, if $\sigma_r(A) \cap \sigma_l(B)=\emptyset$ then
\eqref{formula} is satisfied.
\end{corollary}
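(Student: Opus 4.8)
The plan is to derive \eqref{formula} directly from the two inclusions established in Corollary \ref{inkluzije}, using the hypothesis $\acc\,\sigma_d(A)\cap\acc\,\sigma_p(B)=\emptyset$ to collapse them into an equality. Fix $\sigma_{\ast}\in\{\sigma,\sigma_D,\sigma_{gD}\}$ and an arbitrary $C\in L(\YY,\XX)$. The right-hand inclusion $\sigma_{\ast}(M_C)\subset\sigma_{\ast}(A)\cup\sigma_{\ast}(B)$ is already one half of \eqref{formula} and holds unconditionally, so the only thing to check is the reverse inclusion $\sigma_{\ast}(A)\cup\sigma_{\ast}(B)\subset\sigma_{\ast}(M_C)$.

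For the reverse inclusion I would invoke the left-hand inclusion of Corollary \ref{inkluzije}, namely
\[
(\sigma_{\ast}(A)\cup\sigma_{\ast}(B))\setminus(\acc\,\sigma_d(A)\cap\acc\,\sigma_p(B))\subset\sigma_{\ast}(M_C).
\]
Under the hypothesis $\acc\,\sigma_d(A)\cap\acc\,\sigma_p(B)=\emptyset$, the set being removed is empty, so the left-hand side is exactly $\sigma_{\ast}(A)\cup\sigma_{\ast}(B)$, and we obtain $\sigma_{\ast}(A)\cup\sigma_{\ast}(B)\subset\sigma_{\ast}(M_C)$. Combining this with the right-hand inclusion gives \eqref{formula} for every $C\in L(\YY,\XX)$.

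For the ``in particular'' clause, suppose $\sigma_r(A)\cap\sigma_l(B)=\emptyset$. By Remark \ref{remark}(iii) we have the chain $\acc\,\sigma_d(A)\cap\acc\,\sigma_p(B)\subset\acc\,\sigma_r(A)\cap\acc\,\sigma_l(B)\subset\sigma_r(A)\cap\sigma_l(B)$, and since the last set is empty so is the first. Thus the hypothesis of the first part is met and \eqref{formula} follows. Since essentially all the work has been absorbed into Corollary \ref{inkluzije} and Remark \ref{remark}, there is no real obstacle here; the only point requiring a modicum of care is making sure the set-subtraction bookkeeping in the left-hand inclusion of Corollary \ref{inkluzije} genuinely reduces to the full union when the intersection vanishes, which is immediate.
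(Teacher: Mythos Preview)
Your proof is correct and follows exactly the same approach as the paper, which simply says ``Apply Corollary \ref{inkluzije} and (iii) of Remark \ref{remark}.'' You have merely spelled out the set-theoretic bookkeeping that the paper leaves implicit.
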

\begin{proof}
Apply Corollary ˜\ref{inkluzije} and {\rm (iii)} of Remark
˜\ref{remark}.
\end{proof}

\begin{example}
{\em Let $\XX=\YY=\ell^2(\NN)$ and let $U$ be forward unilateral
shift operator on $\ell^2(\NN)$. It is known that
\[\sigma(U)=\sigma_D(U)=\sigma_{gD}(U)=\{\lambda \in \CC:|\lambda| \leq 1\}.\]
Since $\sigma_p(U)=\emptyset$, then from Corollary ˜\ref{cormar} we
conclude
\[\sigma_{\ast}\left( \left(\begin{array}{cc} A & C \\
0 & U\end{array} \right) \right)=\sigma_{\ast}(A) \cup
\sigma_{\ast}(U),\] where $\sigma_{\ast} \in \{\sigma, \sigma_D,
\sigma_{gD} \}$ and $A, C \in L(\ell^2(\NN))$ are arbitrary
operators. }
\end{example}

\noindent We finish with a slight extension of \cite[Corollary
7]{Han}, \cite[Theorem 2.9]{Zhong} and \cite[Theorem
3.18]{pseudoBW}.

\begin{theorem} \label{holest}
Let $A \in L(\XX)$ and $B \in L(\YY)$. If $\sigma_{\ast} \in
\{\sigma, \sigma_{D}, \sigma_{gD}\}$, then for every $C \in L(\YY,
\XX)$ we have
\begin{equation} \label{holes}
\sigma_{\ast}(A) \cup \sigma_{\ast}(B)=\sigma_{\ast}(M_C) \cup W,
\end{equation}
where $W$ is the union of certain holes in $\sigma_{\ast}(M_C)$,
which happen to be subsets of $\acc \, \sigma_d(A) \cap \acc \,
\sigma_p(B)$.
\end{theorem}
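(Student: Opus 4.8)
The plan is to put $W:=\bigl(\sigma_{\ast}(A)\cup\sigma_{\ast}(B)\bigr)\setminus\sigma_{\ast}(M_C)$ and to check that this choice works. The right inclusion in Corollary \ref{inkluzije} gives $\sigma_{\ast}(M_C)\subseteq\sigma_{\ast}(A)\cup\sigma_{\ast}(B)$, so \eqref{holes} holds automatically, while the left inclusion there is exactly $W\subseteq\acc\,\sigma_d(A)\cap\acc\,\sigma_p(B)$. Since $\sigma(T)$, $\sigma_D(T)$ and $\sigma_{gD}(T)=\acc\,\sigma(T)$ are all compact, I would set $K_1:=\sigma_{\ast}(M_C)\subseteq K_2:=\sigma_{\ast}(A)\cup\sigma_{\ast}(B)$ and reduce the statement to the elementary topological fact that if $K_1\subseteq K_2\subseteq\CC$ are compact and $\partial K_2\subseteq K_1$, then $W=K_2\setminus K_1$ is a (possibly empty) union of bounded components of $\CC\setminus K_1$. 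Indeed, $\partial K_2\subseteq K_1$ forces $W\subseteq\int K_2$, so $W$ is open; and $\overline{W}\subseteq K_2$ gives $\overline{W}\cap(\CC\setminus K_1)\subseteq K_2\setminus K_1=W$, so $W$ is relatively closed in the open set $\CC\setminus K_1$. Being clopen there, $W$ is a union of components of $\CC\setminus K_1$, each bounded because $W\subseteq K_2$; every such hole lies in $W$, hence in $\acc\,\sigma_d(A)\cap\acc\,\sigma_p(B)$.

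It remains to prove $\partial\bigl(\sigma_{\ast}(A)\cup\sigma_{\ast}(B)\bigr)\subseteq\sigma_{\ast}(M_C)$. Using $\partial(E\cup F)\subseteq\partial E\cup\partial F$, it suffices to show $\partial\sigma_{\ast}(A)\subseteq\sigma_{\ast}(M_C)$ and, symmetrically, $\partial\sigma_{\ast}(B)\subseteq\sigma_{\ast}(M_C)$. Fix $\lambda\in\partial\sigma_{\ast}(A)\subseteq\sigma_{\ast}(A)$. If $\lambda\notin\acc\,\sigma_d(A)$, then $\lambda\notin\acc\,\sigma_d(A)\cap\acc\,\sigma_p(B)$ and $\lambda\in\sigma_{\ast}(A)\cup\sigma_{\ast}(B)$, so the left inclusion in Corollary \ref{inkluzije} immediately gives $\lambda\in\sigma_{\ast}(M_C)$. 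For $\sigma_{\ast}=\sigma$ this settles everything after one standard remark: $\partial\sigma(A)\subseteq\sigma_{ap}(A)\subseteq\sigma_l(A)$, and \cite[Theorem 2]{Han} applied to $M_C-\lambda$ gives $\sigma_l(A)\subseteq\sigma(M_C)$ (and dually $\partial\sigma(B)\subseteq\sigma_{su}(B)\subseteq\sigma_r(B)\subseteq\sigma(M_C)$), hence $\partial\bigl(\sigma(A)\cup\sigma(B)\bigr)\subseteq\sigma(M_C)$.

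The hard part will be the case $\sigma_{\ast}\in\{\sigma_D,\sigma_{gD}\}$ with $\lambda\in\partial\sigma_{\ast}(A)\cap\acc\,\sigma_d(A)$, where Corollary \ref{inkluzije} says nothing. Here $\lambda\in\acc\,\sigma_d(A)\subseteq\acc\,\sigma(A)$, so $\lambda$ is a non-isolated point of $\sigma(A)$. From $\lambda\in\partial\sigma_{\ast}(A)$ I would extract $\mu_n\to\lambda$ with $\mu_n\notin\sigma_{\ast}(A)$; in each of the two cases this forces $\mu_n\notin\acc\,\sigma(A)$ (for $\sigma_{gD}$ by definition, for $\sigma_D$ because $\mu_n$ is then outside $\sigma(A)$ or a pole, hence isolated in $\sigma(A)$), so a slight perturbation of each $\mu_n$ yields $\nu_n\to\lambda$ with $\nu_n\notin\sigma(A)$, whence $\lambda\in\partial\sigma(A)$. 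A small‑punctured‑disc argument then shows a point lying in $\partial\sigma(A)\cap\acc\,\sigma(A)$ cannot be isolated in $\partial\sigma(A)$ — otherwise a punctured disc about it, being connected, would split into its parts in $\int\sigma(A)$ and in $\CC\setminus\sigma(A)$, making $\lambda$ either interior to, or isolated in, $\sigma(A)$, both excluded. Hence $\lambda\in\acc\,\partial\sigma(A)\subseteq\acc\,\sigma_{ap}(A)\subseteq\acc\,\sigma_l(A)$. Finally, \cite[Theorem 2]{Han} (as used in the proof of Theorem \ref{converse1}) shows that generalized Drazin invertibility of $M_C-z$ forces $z\notin\acc\,\sigma_l(A)$, so $\acc\,\sigma_l(A)\subseteq\sigma_{gD}(M_C)\subseteq\sigma_D(M_C)$, and $\lambda\in\sigma_{\ast}(M_C)$. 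The case $\lambda\in\partial\sigma_{\ast}(B)$ is entirely parallel, with $\sigma_d,\sigma_{ap},\sigma_l$ replaced by $\sigma_p,\sigma_{su},\sigma_r$ and the dual half of \cite[Theorem 2]{Han} ($z\notin\acc\,\sigma_r(B)$). The essential obstacle is precisely this last case: isolating the boundary points of $\sigma_D(A)$ or $\sigma_{gD}(A)$ that also lie in $\acc\,\sigma_d(A)$ and showing by hand that they accumulate $\sigma_l(A)$, so that \cite[Theorem 2]{Han} can be brought to bear; the case‑by‑case passage from $\mu_n\notin\sigma_{\ast}(A)$ to $\nu_n\notin\sigma(A)$ is routine but cannot be skipped.
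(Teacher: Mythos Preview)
Your proof is correct and considerably more self-contained than the paper's. The paper's argument is essentially two lines: it cites \cite{Han}, \cite{Zhong}, \cite{Zhang} for the fact that \eqref{holes} holds with $W$ a union of holes in $\sigma_{\ast}(M_C)$ (those references already prove the hole-filling phenomenon, with the holes located in larger sets such as $\sigma(A)\cap\sigma(B)$ or $\sigma_D(A)\cap\sigma_D(B)$), and then simply observes that Corollary~\ref{inkluzije} forces these same holes into the smaller set $\acc\,\sigma_d(A)\cap\acc\,\sigma_p(B)$. You instead rebuild the hole-filling structure from scratch: the clopen argument for $W=K_2\setminus K_1$ once $\partial K_2\subseteq K_1$ is exactly the standard topological mechanism underlying the cited results, and your verification of $\partial(\sigma_{\ast}(A)\cup\sigma_{\ast}(B))\subseteq\sigma_{\ast}(M_C)$---routine for $\sigma_{\ast}=\sigma$ via $\partial\sigma(A)\subseteq\sigma_l(A)\subseteq\sigma(M_C)$, and handled for $\sigma_D,\sigma_{gD}$ by the punctured-disc connectedness argument showing $\lambda\in\acc\,\partial\sigma(A)\subseteq\acc\,\sigma_l(A)\subseteq\sigma_{gD}(M_C)$---is correct and replaces the appeal to the literature. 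The paper's route is much shorter because it outsources the structural part; your route is longer but makes the paper independent of \cite{Han}, \cite{Zhong}, \cite{Zhang} for this theorem and exposes precisely why the difference set must consist of full holes.
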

\begin{proof}
Expression \eqref{holes} is the main result of \cite{Han},
\cite{Zhong} and \cite{Zhang}. Corollary ˜\ref{inkluzije} shows that
the filling some holes in $\sigma_{\ast}(M_C)$ should occur in $\acc
\, \sigma_d(A) \cap \acc \, \sigma_p(B)$.
\end{proof}

\medskip

\noindent {\bf Acknowledgements.} The author is supported by the
Ministry of Education, Science and Technological Development,
Republic of Serbia, grant no. 174007.

\bigskip

\bigskip
\noindent
\author{Milo\v s D. Cvetkovi\'c}

\noindent University of Ni\v s\\
Faculty of Sciences and Mathematics\\
P.O. Box 224, 18000 Ni\v s, Serbia

\noindent {\it E-mail}: {\tt miloscvetkovic83@gmail.com}
\end{document}